\RequirePackage{fix-cm}
\documentclass[smallextended]{svjour3}       
\smartqed  
\usepackage{ulem}
\usepackage{xcolor}
\usepackage{graphicx}
\usepackage{amsmath}
\usepackage{graphicx,amssymb}

 \usepackage{mathptmx}
 \usepackage[ruled,vlined]{algorithm2e}

%
%

\newtheorem{exmp}{Example}

%
%
\begin{document}

\title{Total Positivity of Analytic Bases through Symmetric Functions
\protect\thanks{This work was partially supported by Spanish research grants PID2022-138569NB-I00 (MCI/AEI) and RED2022-134176-T (MCI/AEI) and by Gobierno de Aragón (E41$\_$23R, S60$\_$23R).}}

\subtitle{}

\author{P. D\'iaz \and E. Mainar}

\authorrunning{Short form of author list} 

\institute{P. D\'iaz   \at
          \\Departamento de Matem\'{a}tica Aplicada/IUMA, Universidad de Zaragoza, Spain \email{pablodiaz@unizar.es}   
          \and
          E. Mainar \at
             \\Departamento de Matem\'{a}tica Aplicada/IUMA, Universidad de Zaragoza, Spain \email{esmemain@unizar.es} } 

\date{Received: date / Accepted: date}

\maketitle

\begin{abstract} 
  This paper studies the bidiagonal factorization of the collocation matrices of analytic bases using symmetric functions. Explicit formulas for their initial minors are derived in terms of Schur functions. The structure of these formulas permits establishing sufficient conditions for the total positivity of generic systems of analytic functions. In addition, they have been found to lead to generalizations of the Cauchy identity for certain families of functions.

\keywords{ Bidiagonal decompositions \and Schur functions    \and  Totally positive matrices  \and Cauchy identity }

\textbf{Mathematics Subject Classification:}  15A23, 05E05,  65F40
\end{abstract}

\section{Introduction}
A class of matrices that has been extensively analyzed during the last decades are those with all their minors being nonnegative---these are known as totally positive (TP) matrices. The set of TP matrices possesses rich mathematical properties and numerous applications, attracting significant interest across various fields of mathematics, including approximation theory, combinatorics, computer-aided geometric design, and~economics (cf.~\cite{Ando,FJ11,Gasca1,Gasca2,Gasca3}).

An important property of TP matrices is their closure under multiplication, meaning that the product of two TP matrices is also TP (see Theorem 3.1 of~\cite{Ando}).  This property opened the possibility of factorizing TP matrices into products of simpler TP factors, which is a topic widely studied in numerical linear algebra and computational mathematics, providing computational advantages, particularly in numerical algorithms where efficiency and stability are key concerns  \cite{Marco3,dop1,dop2,Koev1,Koev2,Koev4,Demmel1,Demmel2}.

As shown in \cite{Gasca2,Gasca3}, any nonsingular  TP matrix \( A \in \mathbb{R}^{n \times n} \) admits a bidiagonal factorization of the form  
\begin{equation}\label{BDAfac}
    A = F_{n-1} \cdots F_1 D G_1 \cdots G_{n-1},
\end{equation}  
where \( F_i, G_i \in \mathbb{R}^{n \times n} \) are TP lower and upper triangular bidiagonal matrices, and \( D \) is a nonsingular TP diagonal matrix.  
This factorization enables the highly accurate solution of various numerical linear algebra problems \cite{Martinez2024}.   When certain conditions are met, machine precision is achieved even for extremely ill-conditioned problems and large dimensions—a property known as High Relative Accuracy (HRA).

It turns out that all the parameters of the factorization \eqref{BDAfac} can be explicitly determined from the initial minors of $A$ \cite{Gasca1,Gasca2}, 
\begin{equation}\label{Aminors}
    \det A[i-j+1, \dots, i \mid 1, \dots, j], \quad  
    \det A^T[i-j+1, \dots, i \mid 1, \dots, j], \quad  
     \quad 0 < j \leq i \leq n.
\end{equation}  
 Moreover, it is well known that a matrix \( A \) is TP if and only if all the initial minors in \eqref{Aminors} are nonnegative \cite{Gasca1,Gasca3}.

 Collocation matrices encode the values of a given system of functions at a sequence of parameters within the domain (see \eqref{eq:ColMF}). 
From an algebraic point of view, TP colocation matrices play a fundamental role in discretization, interpolation, and various other applications. 
A crucial observation is that any minor of a collocation matrix is an antisymmetric function of the subset of nodes involved. 
 This property extends to their initial minors \eqref{Aminors}. Thus, for TP collocation matrices, all entries in the factorization \eqref{BDAfac}  inherit a natural symmetry, a fact that lies at the core of an interesting connection between the bases whose collocation matrices are TP and symmetric functions.

The relationship above was first highlighted in~\cite{YHZ} in the particular context of  Cauchy-Vandermonde matrices, where it was noted that the entries of \eqref{BDAfac} can be expressed in terms of Schur polynomials.  
 Later,  a systematic research line was initiated in~\cite{DMR}, where explicit formulas for the initial minors of collocation matrices of arbitrary polynomial bases were derived in terms of Schur functions. 
 These results led to novel and practical criteria for determining the total positivity of polynomial bases. Subsequent applications of these findings were applied to Wronskian and Gramian matrices  in~\cite{DMR2,DMR3}, offering new insights and criteria for their total positivity.

In this paper, we extend the approach developed in~\cite{DMR,DMR2,DMR3} to the more general framework of bases of analytic functions. 
From the theoretical point of view, the main result is presented in Theorem \ref{theo1}, which establishes an expansion of the initial minors of the collocation matrix of a generic system of analytic functions in terms of Schur polynomials.  
From a practical perspective, Theorem \ref{theo1} also provides effective criteria for determining total positivity when applied to specific families of functions.

A particularly interesting case arises when considering any analytic function $f$ and the system formed by the functions  $f_i(x)= f(a_i x)$ for distinct parameters $a_i$, $i=1,\dots,n$. In this setting, the achieved expansions yield an infinite family of identities including, as a remarkable particular case, the celebrated Cauchy identity \eqref{eq:Cauchy}. 
The Cauchy identity establishes fundamental links between determinants, symmetric polynomials, and partition functions, 
 with deep connections to representation theory and algebraic combinatorics, inspiring various generalizations across mathematical structures \cite{Mac,Stanley,Bump,ortho,double,Hall,quantum}. Therefore, the detailed derivation of this type of identity, along with illustrative examples, deserves a dedicated section in this article.

The paper is organized as follows. In Section~\ref{sec:prelim}, we introduce notation and key concepts on total positivity and symmetric functions, which play a fundamental role in the paper. Section~\ref{sec:BD_factor} derives explicit formulas in terms of Schur polynomials for the initial minors \eqref{Aminors} of collocation matrices of systems of analytic functions. As a consequence, a sufficient condition to guarantee the total positivity property of these bases is derived. Furthermore,  
in Section~\ref{sec:cauchy_identities},   a family of Cauchy-type identities, extending the classical Cauchy identity, is derived.  Finally, in Section~\ref{sec:conclusions}, we summarize our main findings and suggest directions for future research.

\section{Total positivity and symmetric functions}\label{sec:prelim}
 In the sequel, we shall use the following matrix notation. Given   \( A \in \mathbb{R}^{n \times n} \), the submatrix formed by rows \( i_1, \ldots, i_r \) and columns \( j_1, \ldots, j_s \) will be denoted by  
\(
A[i_1, \ldots, i_r \mid j_1, \ldots, j_s].
\)  
For square submatrices, we shall use the shorthand notation   \(A[i_1, \ldots, i_r] := A[i_1, \ldots, i_r \mid i_1, \ldots, i_r]\).

 A symmetric function is a multivariate function that remains invariant under any permutation of its variables. Formally, a function \( f(x_1, \ldots, x_n) \) is symmetric if  
\[
f(x_{\sigma(1)}, \ldots, x_{\sigma(n)}) = f(x_1, \ldots, x_n),
\]
for any permutation \( \sigma \) of the indices \( \{1, \ldots, n\} \). Conversely, a function \( f \) is alternating if  
\[
f(x_{\sigma(1)}, \ldots, x_{\sigma(n)}) = \text{sgn}(\sigma) f(x_1, \ldots, x_n),
\]
where \(\text{sgn}(\sigma)\) denotes the signature of the permutation, taking values \( +1 \) or \( -1 \) depending on whether \( \sigma \) is even or odd.  

A matrix is called totally positive (TP) if all its minors are nonnegative and strictly totally positive (STP) if all its minors are strictly positive.

Given a system \( \mathbf{f} = (f_1, \ldots, f_n) \) of functions defined on \( I \subseteq \mathbb{R} \) and a sequence   \( \mathbf{x} = \{ x_1 , \ldots, x_n \} \) of parameters in $I$ satisfying \( x_1 < \cdots < x_n \), the collocation matrix of \( \mathbf{f} \) at \( \mathbf{x} \) is defined as  
\begin{equation}\label{eq:ColMF}
M_{\mathbf{f} ,\mathbf{x}}  := \left( f_j(x_i) \right)_{i,j=1}^n.
\end{equation}
The basis \( \mathbf{f} \) is called TP or STP if \( M_{\mathbf{f} ,\mathbf{x}}  \) is TP or STP, respectively, for all choices of \( \mathbf{x} \).  

When considering parameters \(\mathbf{x} = \{ x_1, \ldots, x_n \} \) and  the collocation matrix \( A = M_{\mathbf{f},\mathbf{x}} \) in \eqref{eq:ColMF}, the minors of $A$  are alternant (or antisymmetric) functions of the parameters.  Thus, although the collocation matrix \( M_{\mathbf{f}, \mathbf{x}} \) itself does not inherently possess any symmetry in its nodes, its initial minors do. This fundamental observation underpins the intriguing relationship between symmetric functions and TP bases, a connection that has motivated the present study, as well as the preceding works \cite{DMR,DMR2,DMR3}. 
 
Integer partitions form a key combinatorial structure in the theory of symmetric functions. An integer partition \( \lambda \) of size \( |\lambda| = k \) and  length \( \ell(\lambda) = n \)  is a tuple of positive integers $\lambda = (\lambda_1, \lambda_2, \ldots, \lambda_n)$,  such that 
\begin{equation}\label{eq:partition}
  \sum_{i=1}^{n} \lambda_i = k,\quad  \lambda_1 \geq \lambda_2 \geq \cdots \geq \lambda_n > 0.
\end{equation}
Each \( \lambda_i \) is called a  part of the partition. A partition can be visualized using a Young diagram, consisting of left-aligned box rows, where the \( i \)-th row contains \( \lambda_i \) boxes.  We will denote by $\Lambda $ the set of integer partitions and  $\Lambda_j $ the set of integer partitions such that $\lambda_1\le j$.  

 For a $n\times n$ matrix $A$, $1\le j\le n$, and an integer partition $\lambda = (\lambda_1, \ldots, \lambda_j)$ satisfying \eqref{eq:partition}, 
 we shall denote
\[
       A{ [i,\lambda]}:=  \det A [i-j+1,\dots, i\,|\lambda_j+1,\dots,\lambda_1+j], \quad j \le i \le n. 
   \]
 
Schur polynomials form a distinguished family of symmetric polynomials indexed by integer partitions. Note that, given   \( \lambda = (\lambda_1, \ldots, \lambda_n) \),  
\[
p(x_1, \dots, x_n) := \det \left( x_j^{\lambda_i + n - i} \right)_{i,j=1}^n
\]  
is an alternant polynomial and so, it is divisible by  the Vandermonde determinant  
\begin{equation}\label{eq:V}
V(x_1, \ldots, x_n) := \det \left( x_j^{n-i} \right)_{i,j=1}^n.
\end{equation}
The Schur polynomial  associated with \( \lambda \) is then defined  with the Jacobi's bi-alternant formula as:  
\begin{equation}\label{schurdef}
s_\lambda(x_1, \dots, x_n) := \frac{ p(x_1, \dots, x_n) }{ V(x_1, \ldots, x_n)}.
\end{equation}  
We recall the following well-known properties, which will be used throughout this article. We refer the interested reader to \cite{Mac} for further details.
\begin{itemize}
    \item[i)] If  $x_i>0$, $i=1,\ldots,n$, then 
    \(s_\lambda(x_1,\ldots,x_{n}) > 0\).
    \item[ii)] For any \( \lambda\in \Lambda \) such that $l(\lambda) > n$,  
    \( s_\lambda(x_1,\ldots,x_{n}) = 0.\)
    \item[iii)] Given  $\alpha\in\mathbb R$,
    \[
        s_\lambda(\alpha x_1,   \ldots, \alpha x_{n}) = \alpha^{|\lambda|} s_\lambda(x_1,\ldots,x_{n}).
    \]
\end{itemize}

By running over all partitions of size \( |\lambda| = k \), the corresponding Schur polynomials form a basis for the space of symmetric homogeneous polynomials of degree \( k \). More generally, when considering all partitions, they provide a basis for the space of symmetric functions.

\section{Initial minors of collocation matrices and symmetric functions} \label{sec:BD_factor}

Any alternant function  \( f  \)  can be expressed as  
\[ 
    f(x_1,\dots,x_n) = V(x_1, \ldots, x_n) \, g(x_1,\dots,x_n),
\] 
where \( g  \)  is a symmetric function and $V $ the Vandermonde determinant for the variables \((x_1,\ldots,x_n)\), as defined  in  \eqref{eq:V}.
This property extends to the initial minors of collocation matrices \( M_{\mathbf{f}, \mathbf{x}}   \), and we can write:  
\begin{eqnarray} \label{eq:gij}
 \det  M_{\mathbf{f}, \mathbf{x}}[i-j+1, \dots, i \mid 1, \dots, j] &=& V(x_{i-j+1}, \ldots, x_i) \, g_{i,j}(x_{i-j+1},\dots,x_i), \nonumber \\
  \det  M_{\mathbf{f}, \mathbf{x}}^t [i-j+1, \dots, i \mid 1, \dots, j] &=& V(x_1, \ldots, x_j) \, g_{j,i}(x_1,\dots,x_j),   
\end{eqnarray} 
for   $0 < j \le i \leq n$, with a suitable collection of \( n^2 \) symmetric functions \( g_{i,j} \) with \(i,j=1,\dots,n \), that can be expanded in the basis of Schur polynomials. 

The following result provides an explicit expansion \eqref{eq:gij} of the minors \eqref{Aminors} of collocation matrices of systems of analytic functions.  
As we are going to see, the coefficients  are fundamentally related to the minors of an infinite Wronskian matrix of   \( {\bf f} \), defined as  
\begin{equation}\label{extendedW}
       W_{{\mathbf f},  a}:=\big(f_j^{(i-1)}(a)\big)_{1 \le i < \infty; 1\le j \le n},\quad a\in I, 
\end{equation} 
where \(f ^{(i)}(a)\) denotes the $i$-th derivative of the function $f$ at the parameter $a$ in its domain. 
Without loss of generality, we set \( a = 0 \) and denote the resulting matrix as \( W_{\bf f} := W_{{\mathbf f},  0}  \).

\begin{theorem} \label{theo1}
Let  \( \mathbf{f} = (f_1, \ldots, f_n) \) be a system of analytic functions on \( I \subseteq \mathbb{R} \) and   \( \mathbf{x} = \{ x_1 , \ldots, x_n \} \) in \( I \) satisfying \( x_1 < \cdots < x_n \). Then 
\begin{equation}\label{MWT}
    \det M_{\bf f, \bf x}  {[i-j+1,\dots,i\,|\,1,\dots, j]}
 = V(x_{i-j+1},\dots,x_i)  \sum_{\lambda\in \Lambda}\frac{1}{C_\lambda}W^t_{{\bf f}}{[j,\lambda]}s_{\lambda}(x_{i-j+1},\dots,x_i),
\end{equation}
   and
   \begin{equation}\label{MWTt}
  \det M_{\bf f, \bf x}^t {[i-j+1,\dots,i\,|\,1,\dots, j]}
 = V(x_{1},\dots,x_j) \sum_{\lambda\in \Lambda}\frac{1}{C_\lambda}W^t_{{\bf f}}{[i,\lambda]}s_{\lambda}(x_{1},\dots,x_j),
\end{equation} 
where, $0 < j \leq i \leq n$, and,  for any $\lambda=(\lambda_1,\ldots,\lambda_j) \in \Lambda$,
\begin{equation}\label{eq:CLambda}
    C_\lambda:=\lambda_j!(\lambda_{j-1}+1)!\cdots (\lambda_1+j-1)!.
\end{equation}
     
\end{theorem}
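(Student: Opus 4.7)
The strategy is to express the collocation matrix as an infinite matrix product and apply Cauchy–Binet. Since each \(f_\ell\) is analytic on an open neighborhood of \(0 \in I\), the Taylor expansion
\[
f_\ell(x_m) \;=\; \sum_{k=0}^{\infty} x_m^{k}\, \frac{1}{k!}\, f_\ell^{(k)}(0)
\]
produces the factorization \(M_{{\bf f},{\bf x}} = X\, D\, W_{\bf f}\), where \(X\) is the \(n\times\infty\) monomial matrix with entries \(X_{m,k+1}=x_m^{k}\), \(D\) is the infinite diagonal matrix with entries \(1/k!\), and \(W_{\bf f}\) is the infinite Wronskian matrix in \eqref{extendedW} evaluated at \(a=0\). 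The factorization is convergent on a neighborhood of the origin, and the resulting identity extends to all of \(I\) by analyticity.

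\textbf{Cauchy–Binet and the partition reindexing.} I apply the Cauchy–Binet formula (generalized to an infinite middle dimension, with convergence secured by the \(1/k!\) weights in \(D\)) to the initial minor \(\det M_{{\bf f},{\bf x}}[i-j+1,\ldots,i\mid 1,\ldots,j]\). This produces a sum over strictly increasing \(j\)-tuples \(0 \le k_{1} < \cdots < k_{j}\):
\[
\sum_{{\bf k}}\, \det X[i-j+1,\ldots,i\mid k_{1}+1,\ldots,k_{j}+1] \; \prod_{s=1}^{j}\frac{1}{k_{s}!} \; \det W_{\bf f}[k_{1}+1,\ldots,k_{j}+1\mid 1,\ldots,j].
\]
Using the classical bijection \(k_{t} = \lambda_{j-t+1} + (t-1)\) between such tuples and partitions \(\lambda = (\lambda_{1},\ldots,\lambda_{j})\) of length at most \(j\), substitution converts \(\prod_{s} k_{s}!\) directly into \(C_\lambda\) of \eqref{eq:CLambda}. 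The \(X\)-minor is a generalized alternant which, after transposition plus the reversal that takes the increasing exponents \((k_{t})\) into the decreasing sequence \((\lambda_{a}+j-a)\) appearing in the Jacobi bi-alternant \eqref{schurdef}, evaluates to \(V(x_{i-j+1},\ldots,x_{i})\, s_\lambda(x_{i-j+1},\ldots,x_{i})\). Finally, transposing the \(W_{\bf f}\)-minor and matching it against the notation of Section~\ref{sec:prelim} identifies it as \(W_{\bf f}^{t}[j,\lambda]\). Assembling these three factors yields \eqref{MWT}.

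\textbf{The transposed identity and the main obstacle.} For \eqref{MWTt}, I invoke \(\det M^{t}[R\mid C] = \det M[C\mid R]\) and repeat the Cauchy–Binet argument after interchanging the roles of rows and columns, so that the Vandermonde is now in the variables \(x_{1},\ldots,x_{j}\) and the Wronskian minor becomes \(W_{\bf f}^{t}[i,\lambda]\). The principal obstacle is the Vandermonde-to-Schur step of the reindexing: one has to book-keep carefully the sign \((-1)^{j(j-1)/2}\) coming from the row reversal that aligns the increasing-exponent determinant produced by Cauchy–Binet with the decreasing-exponent Jacobi formula, and verify that this sign is absorbed by the convention used for \(V\) in the paper. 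A secondary technical point is the rigorous justification of the infinite Cauchy–Binet expansion, which rests on the absolute and uniform convergence of the Taylor series on compact subsets, guaranteed by the factorial decay in \(D\).
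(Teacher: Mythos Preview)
Your proposal is correct and follows essentially the same route as the paper: the paper's steps~(1)--(4) carry out Cauchy--Binet by hand (Taylor expansion, multilinearity, discarding repeated indices, regrouping as an ordered tuple times a permutation sum that collapses to the Wronskian minor), and its steps~(5)--(6) are exactly your partition reindexing and Jacobi bi-alternant identification. The only cosmetic difference is that you invoke Cauchy--Binet by name and flag the sign/convergence bookkeeping explicitly, whereas the paper unrolls the same computation line by line.
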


\begin{proof}
The formula \eqref{MWT} is derived using the following equalities, which are justified below. The proof of \eqref{MWTt} follows  in an entirely analogous manner. 
    \begin{eqnarray*}
    &&\det M_{\bf f, \bf x}{[i-j+1,\dots,i\,|\,1,\dots, j]} =\det \left( \sum_{k=0}^\infty \frac{f_m^{(k)}(0)}{k!}x_l^{k}\right)_{i-j+1 \le l \le i; 1\le m \le j} \nonumber \\
    &\stackrel{^{(1)}}{=}&{  \sum_{r=1}^j \sum_{k_r=0}^\infty \det\left(  \frac{f_m^{(k_m)}(0)}{k_m!}x_l^{k_m}\right)_{l,m}
    =\sum_{r=1}^j \sum_{k_r=0}^\infty   \bigg[\prod_{m=1}^j \frac{f_m^{(k_m)}(0)}{k_m!}\bigg]\det\left( x_l^{k_m}\right)_{l,m} }\\
    &\stackrel{^{(2)}}{=}&   \sum_{k_1>\cdots> k_j=0}^\infty \sum_{\sigma\in S_j} \bigg[\prod_{m=1}^j \frac{f_{m}^{(k_{\sigma(m)})}(0)}{k_{\sigma(m)}!}\bigg]\det\Big(x_l^{k_{\sigma(m)}}\Big)_{l,m}  \nonumber\\
    &\stackrel{^{(3)}}{=}&\sum_{k_1>\cdots> k_j=0}^\infty \sum_{\sigma\in S_j} \bigg[\prod_{m=1}^j \frac{f_m^{(k_{\sigma(m)})}(0)}{   k_{m}!}\bigg]\text{sgn}(\sigma)\det\Big(x_l^{k_m}\Big)_{l,m}\nonumber\\
    &\stackrel{^{(4)}}{=}&\sum_{k_1>\cdots> k_j=0}^\infty  \bigg[\prod_{m=1}^j \frac{1}{k_m!}\bigg]\det\Big(f_r^{(k_s)}(0)\Big)_{1\leq s,r\leq j}\det\Big(x_l^{k_m}\Big)_{l,m}\nonumber \\
    &=&\sum_{k_1>\cdots> k_j=0}^\infty  \bigg[\prod_{m=1}^j \frac{1}{k_m!}\bigg]W_{\bf f}^t{[1,\dots,j|\,k_j+1,\dots,k_1{  +1}]}\det\Big(x_l^{k_m}\Big)_{l,m}\nonumber \\
    &\stackrel{^{(5)}}{=}&\sum_{\substack{ \lambda\in\Lambda\\
    l(\lambda)\leq j}}  \bigg[\prod_{m=1}^j \frac{1}{(\lambda_m+n-m)!}\bigg]W^t_{{\bf f}}{[j,\lambda]}\det\Big(x_l^{\lambda_m+j-m}\Big)_{l,m}\nonumber \\
    &\stackrel{^{(6)}}{=}& V( x_{i-j+1},\dots,x_i) \sum_{\lambda\in \Lambda}\frac{1}{C_\lambda}W^t_{{\bf f}}{[j,\lambda]}s_{\lambda}(x_{i-j+1},\dots,x_i),
\end{eqnarray*}
where, in the indicated identities, we have taken into account the following aspects.
\begin{enumerate}
    \item[(1)] 
Each $k_1,\dots,k_j$ takes the values of the degree of the terms in the McLaurin expansion of $f_1,\ldots, f_j$, respectively.  Then,  basic properties of determinants have been applied. 
    
    \item[(2)] If \(k_r = k_s\) for some \(r, s \in \{1, \dots, j\} \), we have  
    \[\det\Big(x_l^{k_m}\Big)_{l,m}=0.\]
 Therefore, the summatory over \(j\)-tuples \((k_1, \dots, k_j)\) reduces to a sum over tuples with distinct entries. Consequently, we can decompose the sum into two parts: the sum over ordered sequences \(k_1 > \cdots > k_j\) and the sum over all permutations of the elements of each sequence expressed as \((k_{\sigma(1)}, k_{\sigma(2)}, \dots, k_{\sigma(j)})\), where \(\sigma \in S_j\).

    \item[(3)] Be aware that, for $\sigma\in S_j$, we have 
    \begin{equation*}
      \prod_{m=1}^j \frac{1}{k_{\sigma(m)}!}= \prod_{m=1}^j \frac{1}{k_m!}\quad \text{ and }\quad \det\Big(x_l^{k_{\sigma(m)}}\Big)_{l,m}=\det\Big(x_l^{k_{m}}\Big)_{l,m}\text{sgn}(\sigma).
    \end{equation*}
    \item[(4)] Note that
    \begin{equation*}
        \sum_{\sigma\in S_j} \prod_{m=1}^j f_m^{(k_{\sigma(m)})}(0)\text{ sgn}(\sigma)=\det\Big(f_r^{(k_s)}(0)\Big)_{1\leq s,r\leq j}.
    \end{equation*}
    \item[(5)] 
After performing the substitution  
\begin{equation*}
    k_m = \lambda_m + j - m, \quad m = 1, \dots, j,
\end{equation*}  
each tuple \(k_1 > \cdots > k_j\) is transformed into a partition \(\lambda=(\lambda_1, \dots, \lambda_j)\) with at most \(j\) parts. It is straightforward to observe that summing over all sequences \(k_1 > \cdots > k_j\) is equivalent to summing over all partitions \(\lambda\in\Lambda\) with \(l(\lambda) \leq j\).  

\item[(6)] We have applied Jacobi's bialternant formula \eqref{schurdef}.  The restriction \(l(\lambda) \leq j\) is unnecessary in the summation since Schur functions of $j$ variables are zero whenever \(l(\lambda) > j\).  
\end{enumerate}
 \qed
\end{proof}

Note that if the functions of system ${\bf f}=(f_1,\ldots,f_n)$ had been expanded around $a\neq 0$,  formulas \eqref{MWT} and \eqref{MWTt}   would  be analogous, involving the Wronskian $W_{{\bf f},a} $ and the translation of the Schur functions $s_{\lambda}(x_{i-j+1}-a,\dots,x_i-a)$.

Let \( \mathbf{P}^{n}(I) \) denote the space of polynomials of degree at most \( n \), defined on \( I \subseteq \mathbb{R} \). 
The following result shows that, in the particular polynomial case, identities \eqref{MWT} and \eqref{MWTt} reduce to those found in \cite{DMR}.

\begin{corollary} Let \( \mathbf{p} = (p_{1}, \ldots, p_{n}) \) be a basis of \( \mathbf{P}^{n-1}(I) \) such that  
\( p_{i}(x) = \sum_{j=1}^{n} a_{i,j}\,x^{j-1}\),  for \(i = 1, \dots, n\), and define \( A := (a_{i,j})_{1 \leq i,j \leq n} \). Then, the following identities hold:
    \begin{equation}\label{initialP}
 \det M_{\bf p, \bf x} {[i-j+1,\dots,i\,|\,1,\dots, j]}= V( x_{i-j+1},\dots,x_i) \sum_{ \lambda\in\Lambda_{n-j}}A{[j,\lambda]}s_{\lambda}(x_{i-j+1},\dots,x_i),
\end{equation}
and
\begin{equation}\label{initialtransP}
   \det M_{\bf f, \bf x}^t {[i-j+1,\dots,i\,|\,1,\dots, j]}
 = V(x_{1},\dots,x_j) \sum_{ \lambda\in\Lambda_{n-j}}A{[j,\lambda]}s_{\lambda}(x_{i-j+1},\dots,x_i),
\end{equation}  
for $0 < j \leq i \leq n$.
\end{corollary}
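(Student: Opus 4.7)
The plan is to apply Theorem \ref{theo1} directly: since every polynomial $p_i$ is analytic on $I$, the identities \eqref{MWT} and \eqref{MWTt} hold for $\mathbf{p}$, and the only remaining task is to express the Wronskian minors $W^t_{\mathbf{p}}[j,\lambda]$ (and $W^t_{\mathbf{p}}[i,\lambda]$) at $a=0$ in terms of the coefficient matrix $A$. The key observation is the elementary computation $p_i^{(k)}(0)=k!\,a_{i,k+1}$ for $0\le k\le n-1$ and $p_i^{(k)}(0)=0$ for $k\ge n$, so that the $(i,k)$-entry of $W^t_{\mathbf{p}}$ equals $(k-1)!\,a_{i,k}$ for $1\le k\le n$ and vanishes otherwise.

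First I would identify the support of the sums \eqref{MWT} and \eqref{MWTt}. For a partition $\lambda=(\lambda_1,\dots,\lambda_j)$, the minor $W^t_{\mathbf{p}}[j,\lambda]$ involves the columns labelled $\lambda_j+1<\lambda_{j-1}+2<\cdots<\lambda_1+j$ of $W^t_{\mathbf{p}}$. If $\lambda_1>n-j$, then the largest column label $\lambda_1+j$ exceeds $n$ and the corresponding column is zero, making the minor vanish. This restricts the effective summation to $\lambda\in\Lambda_{n-j}$, in line with the statement. Next, for any such $\lambda$, I would factor the scalar $(\lambda_{j-m+1}+m-1)!$ out of the $m$-th column of $W^t_{\mathbf{p}}[j,\lambda]$. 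The product of these scalars over $m=1,\dots,j$ is precisely $C_\lambda=\lambda_j!(\lambda_{j-1}+1)!\cdots(\lambda_1+j-1)!$, and what remains in each column is $a_{r,\lambda_{j-m+1}+m}$, so the resulting determinant coincides with $A[j,\lambda]$. Hence $W^t_{\mathbf{p}}[j,\lambda]=C_\lambda\,A[j,\lambda]$, and substituting this into \eqref{MWT} cancels the factor $1/C_\lambda$ and yields \eqref{initialP}. The identity \eqref{initialtransP} follows in the same way from \eqref{MWTt} via $W^t_{\mathbf{p}}[i,\lambda]=C_\lambda\,A[i,\lambda]$.

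The only delicate point is making sure the column extraction produces $A[j,\lambda]$ itself rather than a permuted or sign-altered version. This holds because both $W^t_{\mathbf{p}}[j,\lambda]$ and $A[j,\lambda]$ are read off with exactly the same increasing ordering of column indices $\lambda_j+1<\lambda_{j-1}+2<\cdots<\lambda_1+j$, so no reordering occurs and no sign correction is needed. This is a routine bookkeeping verification rather than a genuine obstacle, which makes the corollary a direct specialization of Theorem \ref{theo1} to the polynomial setting and recovers the formulas of \cite{DMR}.
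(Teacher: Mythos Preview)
Your proposal is correct and follows essentially the same route as the paper: both apply Theorem~\ref{theo1} to $\mathbf{p}$, observe that $p_i^{(k)}(0)=k!\,a_{i,k+1}$ for $0\le k\le n-1$ and vanishes for $k\ge n$ (equivalently, $W_{\mathbf p}^t[1,\dots,n]=A\,\mathrm{diag}(0!,1!,\dots,(n-1)!)$), deduce $W^t_{\mathbf p}[l,\lambda]=C_\lambda\,A[l,\lambda]$ for $\lambda_1\le n-j$ and $0$ otherwise, and substitute into \eqref{MWT}--\eqref{MWTt}. Your additional remark on column ordering is a sound bookkeeping check that the paper simply omits.
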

\begin{proof}

Since $p_i^{(j)}(0)=0$ for $j\ge n$,    \( W^t_{\bf p} \) has nonzero entries only in the first \( n \) columns. Thus, the minors of \( W^t_{\bf p} \) involving columns \( j \geq n \) are zero and do not contribute to the sum in \eqref{MWT}. 
 Moreover,
\[
W_{\bf p}^t[1,\ldots,n] =  A  D, \quad D=\textrm{diag}(0!, 1!,\ldots, (n-1)!).
\]
Then, we obtain  
\begin{equation}\label{WPA}
W^t_{\bf p}{[l,\lambda]} =
\begin{cases}  
C_\lambda \,A{[l,\lambda]}, & \lambda_1 \leq n-j, \\  
0, & \lambda_1 > n-j,
\end{cases}  
\end{equation}  
with $C_\lambda$ as in \eqref{eq:CLambda}. Substituting \eqref{WPA} into \eqref{MWT}, we derive \eqref{initialP}. The formula \eqref{initialtransP} follows analogously.  
\qed
\end{proof}

As a consequence of Theorem \ref{theo1}, we can establish the following sufficient condition for the total positivity of any basis formed by analytic functions.

\begin{theorem}\label{theoWF}
Let  \( \mathbf{f} = (f_1, \ldots, f_n) \) be a system of analytic functions on \( I \subseteq \mathbb{R} \) such that the infinite Wronskian  \( W_{\bf f} \)  in \eqref{extendedW} is TP. Then 
    ${\bf f}$ is TP on $(0,\infty)$.
\end{theorem}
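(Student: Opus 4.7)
The plan is to reduce the statement, via the initial-minor characterization of total positivity cited in the Introduction, to a termwise sign analysis of the Schur-polynomial expansions given in Theorem~\ref{theo1}. First, I would fix an arbitrary sequence $\mathbf{x}=\{x_1<\cdots<x_n\}$ in $(0,\infty)$ and recall that $M_{\mathbf{f},\mathbf{x}}$ is TP if and only if every initial minor
\[
\det M_{\mathbf{f},\mathbf{x}}[i-j+1,\dots,i\mid 1,\dots,j],\qquad \det M_{\mathbf{f},\mathbf{x}}^t[i-j+1,\dots,i\mid 1,\dots,j],
\]
with $0<j\leq i\leq n$, is nonnegative. Then I would substitute each such minor by its representation \eqref{MWT}, respectively \eqref{MWTt}, as a series of Schur polynomials weighted by minors of $W_{\mathbf{f}}^{t}$.

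The second step is to check that every factor on the right-hand side of those identities has the correct sign. The Vandermonde prefactor $V(x_{i-j+1},\dots,x_i)$ (or $V(x_1,\dots,x_j)$) is strictly positive because the nodes are strictly increasing; the normalizing constants $C_\lambda$ defined in \eqref{eq:CLambda} are positive integers; and the Schur values $s_\lambda(x_{i-j+1},\dots,x_i)$ are nonnegative (in fact positive whenever $\ell(\lambda)\leq j$) by property (i) in Section~\ref{sec:prelim}, since every node lies in $(0,\infty)$. Finally, each coefficient $W_{\mathbf{f}}^{t}[j,\lambda]$ is, by the shorthand introduced before Theorem~\ref{theo1}, a finite minor of $W_{\mathbf{f}}^{t}$; transposing rows and columns gives the equal minor of $W_{\mathbf{f}}$, which is nonnegative by the hypothesis that $W_{\mathbf{f}}$ is TP. The same reasoning handles the coefficients $W_{\mathbf{f}}^{t}[i,\lambda]$ appearing in \eqref{MWTt}.

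Summing nonnegative terms then yields the nonnegativity of every initial minor, so $M_{\mathbf{f},\mathbf{x}}$ is TP. Since $\mathbf{x}$ was an arbitrary sequence in $(0,\infty)$, the system $\mathbf{f}$ is TP on $(0,\infty)$, as required.

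The only subtlety — and therefore the main point of care rather than a genuine obstacle — is that the Schur-polynomial expansions in Theorem~\ref{theo1} are infinite series, so one must be sure that the termwise sign analysis transfers to the sum. This is immediate here, because Theorem~\ref{theo1} already establishes these as valid identities between well-defined quantities, so any pointwise summand being nonnegative forces the total to be nonnegative as well. No further analytic machinery is needed.
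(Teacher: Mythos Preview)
Your proposal is correct and follows essentially the same approach as the paper: invoke the Schur expansions \eqref{MWT}--\eqref{MWTt} from Theorem~\ref{theo1}, observe that every factor (Vandermonde prefactor, $1/C_\lambda$, Schur value, Wronskian minor) is nonnegative under the stated hypotheses, and conclude via the initial-minor criterion for total positivity. Your write-up is more explicit about the transpose step and the convergence of the series, but the underlying argument is identical.
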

\begin{proof}
If \( W_{\bf f} \) is TP, we can guarantee that all its minors in \eqref{MWT} and \eqref{MWTt} are non-negative. 
Moreover, it is well-known that a Vandermonde matrix is TP for any \( 0 < x_1 < x_2 < \cdots < x_n \), and the Schur functions are positive for positive arguments. 
So, using Theorem  \ref{theo1}, we deduce that all the initial minors  \eqref{Aminors} of \(M_{\bf f, \bf x}\)  are nonnegative and can guarantee the total positivity of   \(M_{\bf f, \bf x}\) (see Section 4 of \cite{Gasca1}).\qed
\end{proof}

\begin{theorem}
Let \( \mathbf{f} = (f_1, \ldots, f_n) \) be a system of analytic functions on \( I \subseteq \mathbb{R} \). Then $W_{\bf f}$ is TP on $I$
      if and only if, for any nonnegative integer $k$, the systems \({\bf f}_k=(f_1^{(k)},\dots, f_n^{(k)})\)  are TP on $I$.  
\end{theorem}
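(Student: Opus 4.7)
My plan is to combine Theorem~\ref{theoWF} (taken in the translated form mentioned in the remark after the proof of Theorem~\ref{theo1}) with a coalescence-limit argument based on Theorem~\ref{theo1}. I read the statement as: $W_{{\bf f},a}$ is TP for every $a\in I$ if and only if each ${\bf f}_k$ is TP on $I$.

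\textbf{Direction $(\Rightarrow)$.} Differentiating shifts the row index of the Wronskian downward, so $W_{{\bf f}_k,a}$ is precisely the submatrix of $W_{{\bf f},a}$ obtained by deleting its first $k$ rows. Row deletion preserves total positivity, so $W_{{\bf f}_k,a}$ is TP for every $a\in I$. The translated Theorem~\ref{theoWF} then yields that ${\bf f}_k$ is TP on $(a,\infty)\cap I$; letting $a$ vary through $I$ (with a continuity argument at any left endpoint of $I$ that belongs to $I$) gives TP of ${\bf f}_k$ on all of $I$.

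\textbf{Direction $(\Leftarrow)$.} Fix $a\in I$. I show that every initial minor of the infinite matrix $W_{{\bf f},a}$ is nonnegative; total positivity then follows from the initial-minor characterisation invoked in the proof of Theorem~\ref{theoWF}. For a column-initial minor $W_{{\bf f},a}[i-j+1,\dots,i\mid 1,\dots,j]$, a direct index check shows it equals the top-left $j\times j$ minor of $W_{{\bf f}_{i-j},a}$. I take collocation nodes $x_t(\epsilon)=a+t\epsilon$, $t=1,\dots,j$, apply the translated identity \eqref{MWT} with ${\bf f}$ replaced by ${\bf f}_{i-j}$, and divide both sides by the strictly positive $V(x_1(\epsilon)-a,\dots,x_j(\epsilon)-a)$. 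The homogeneity property $s_\lambda(\epsilon y_1,\dots,\epsilon y_j)=\epsilon^{|\lambda|}s_\lambda(y_1,\dots,y_j)$ forces the $\lambda$-term of the resulting series to scale as $\epsilon^{|\lambda|}$, so letting $\epsilon\to 0^+$ isolates the $\lambda=(0,\dots,0)$ contribution, whose value is exactly the target minor. Since the left-hand side is nonnegative for every $\epsilon>0$ by TP of ${\bf f}_{i-j}$, so is the limit. Row-initial minors $W_{{\bf f},a}[1,\dots,j\mid i-j+1,\dots,i]$ (with $j\le i\le n$) are handled identically via \eqref{MWTt} applied to ${\bf f}={\bf f}_0$, whose TP is part of the hypothesis; again the empty-partition term of the expansion reproduces the desired minor.

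\textbf{Main obstacle.} The delicate step is matching each initial minor of the \emph{infinite} matrix $W_{{\bf f},a}$ to the empty-partition term of an appropriate Schur expansion. Column-initial minors require the preliminary shift ${\bf f}\mapsto{\bf f}_{i-j}$ so that the target minor sits in the top-left corner of the associated Wronskian, which is what forces the use of the assumption for every $k\ge 0$ and not merely $k=0$; row-initial minors, by contrast, are read off directly from \eqref{MWTt} applied to ${\bf f}$ itself. Once these identifications are fixed, the coalescence $\epsilon\to 0^+$ transfers nonnegativity from collocation minors of each ${\bf f}_k$ to the required minors of $W_{{\bf f},a}$, closing the equivalence.
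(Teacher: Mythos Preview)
Your forward direction coincides with the paper's: both observe that $W_{{\bf f}_k,a}$ is obtained from $W_{{\bf f},a}$ by deleting the first $k$ rows and then invoke Theorem~\ref{theoWF}. You are in fact slightly more careful than the paper here, since you explicitly vary the expansion point $a$ through $I$ to obtain TP on all of $I$ rather than only on $(0,\infty)$.

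For the reverse direction the two proofs diverge. The paper simply cites an external result (Theorem~2 of \cite{DMR2}) asserting that TP of every ${\bf f}_k$ forces $W_{\bf f}$ to be TP. Your coalescence argument is genuinely different and entirely self-contained within the machinery of this paper: you recognise each column-initial minor of $W_{{\bf f},a}$ as the leading principal minor of $W_{{\bf f}_{i-j},a}$, read this off as the empty-partition term of the translated expansion \eqref{MWT}, and recover it as the limit $\epsilon\to 0^+$ of nonnegative collocation quotients; row-initial minors come analogously from \eqref{MWTt}. This is a nice internalisation of the argument and makes transparent why TP of \emph{every} ${\bf f}_k$ is needed (the shift ${\bf f}\mapsto{\bf f}_{i-j}$ is what reaches arbitrary row blocks of the infinite Wronskian). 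Two minor points to tighten: the empty-partition term is the target minor divided by the positive constant $C_\emptyset=0!\,1!\cdots(j-1)!$, not the minor itself, though this is harmless for the sign conclusion; and a one-line remark that the translated series is an honest convergent power series in $\epsilon$ (by analyticity) would justify the term-by-term limit. With those cosmetic fixes your argument is correct and arguably more informative than the paper's citation.
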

\begin{proof}
   First, note that
   \[
   \Big(W_{{\bf f}_k}\Big)_{i,j}=\Big(W_{\bf f}\Big)_{i+k,j}.
   \] 
   Then, if $W_{\bf f}$ is TP so is $W_{{\bf f}_k}$. Considering Theorem \ref{theoWF}, we conclude that  ${\bf f}_k$ is TP.  
    For the inverse implication, use Theorem 2 of \cite{DMR2}, which assures that ${\bf f}_k$ being TP for all nonnegative $k$ is a sufficient condition for $W_{\bf f}$ to be TP. \qed
\end{proof}

\section{Explicit formulation of Cauchy-type identities}\label{sec:cauchy_identities}
 
The well-known   Cauchy identity 
\begin{equation} \label{eq:Cauchy}
\prod_{i,j=1}^n \frac{1}{1 - x_i y_j} = \sum_{\lambda\in\Lambda} s_\lambda(x_1, \ldots, x_n) s_\lambda(y_1,\ldots, y_n)
\end{equation} 
reveals deep connections between seemingly disparate entities, such as determinants, symmetric polynomials, and partition functions. Rigorous algebraic proofs
of the identity can be found in classical references such as \cite{Mac} and \cite{Stanley}.  Extensions exist for characters of Orthogonal and Symplectic groups \cite{ortho}, double-Schur polynomials \cite{double}, Hall-Littlewood polynomials \cite{Hall}, and
$q$-deformations \cite{quantum}, among others. Each of these adaptations reveals new layers of insight into the interplay between algebraic structures.

In this section,  the determinant expansions in Theorem \ref{theo1} considered for a particular family of collocation matrices are analyzed to derive identities, with the classical Cauchy formula \eqref{eq:Cauchy} emerging as a special case. 

Let us suppose that  $f$ is a function infinitely differentiable at \(a=0\) and  $R> 0$ is the radius of convergence of its MacLaurin series. 
  For a positive increasing sequence   \( a_1,  \ldots, a_n \),  we can  define  on  $I=(-R/a_n,R/a_n)$ a system \( {\bf f} = (f_1, \dots, f_n) \) with  
\begin{equation}\label{defg}
  f_i(x) := f(a_i x),\quad i = 1, \dots, n.
\end{equation}

The systems defined as in \eqref{defg} form a broad family and, as we will show in this section, simpler criteria for total positivity apply to them. 
The following result illustrates that the expansion of their initial minors leads to Cauchy-type identities.  
 
\begin{theorem}\label{theo}
 Let \( {\bf f} = (f_1,  \dots, f_n) \) be the system defined as in \eqref{defg}. Given ${\bf x}=(x_1,\dots, x_n)$ on $I$,  we have:
  \begin{equation}\label{MFG}
  \frac{ \det M_{\bf f, \bf x}  {[i-j+1,\dots,i\,|\,1,\dots, j]} }{ V(a_{1},\dots,{
  a_j})V(x_{i-j+1},\dots,x_i)} =  \sum_{\lambda\in\Lambda}\frac{F_\lambda}{C_\lambda}s_{\lambda}(a_{1},\dots,a_j)s_{\lambda}(x_{i-j+1},\dots,x_i),
\end{equation}
for  $0 < j \leq i \leq n$, where $C_{\lambda}$ is defined in \eqref{eq:CLambda}, $V$ denotes the Vandermonde determinant  defined in  \eqref{eq:V}, and
\begin{equation}  \label{eq:FLambda}
    F_\lambda:=\prod_{l=1}^j f^{(\lambda_l+j-l)}(0). 
\end{equation}
Moreover, 
\begin{equation}\label{MFtG}
  \frac{     \det M_{\bf f, \bf x}^t {[i-j+1,\dots,i\,|\,1,\dots, j]}}{V(a_{i-j+1},\dots,a_i)V(x_{1},\dots,x_j) }= \sum_{\lambda\in\Lambda}\frac{F_\lambda}{C_\lambda}s_{\lambda}(a_{i-j+1},\dots,a_i)s_{\lambda}(x_{1},\dots,x_j),  
\end{equation}  
for $0 < j \leq i \leq n$.
\end{theorem}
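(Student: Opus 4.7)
The strategy is to specialize Theorem \ref{theo1} to the system $\mathbf{f} = (f_1, \ldots, f_n)$ with $f_i(x) = f(a_i x)$ and to evaluate the Wronskian minors $W_{\bf f}^t[j, \lambda]$ and $W_{\bf f}^t[i, \lambda]$ in closed form. The key observation is that $f_i^{(k)}(0) = a_i^k f^{(k)}(0)$, which causes every entry of $W_{\bf f}^t$ to factor as $(W_{\bf f}^t)_{r, c} = a_r^{c-1} f^{(c-1)}(0)$. Consequently, each column of the relevant submatrix carries a common scalar depending only on the column index, and the Wronskian minor decouples into a product of derivatives of $f$ at zero (producing the factor $F_\lambda$ of \eqref{eq:FLambda}) times a pure monomial-Vandermonde determinant in the $a_i$'s, which is of Jacobi bialternant form.

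The first step is to write $W_{\bf f}^t[j, \lambda] = \det W_{\bf f}^t[1, \ldots, j \mid c_1, \ldots, c_j]$ with columns in increasing order $c_m = \lambda_{j-m+1} + m$, and to pull the scalar $f^{(c_m - 1)}(0)$ out of column $m$. After reindexing by $l = j - m + 1$, the product of these scalars telescopes to $\prod_{l=1}^j f^{(\lambda_l + j - l)}(0) = F_\lambda$. The remaining determinant has entry $a_r^{c_m - 1}$ at position $(r, m)$, and Jacobi's bialternant formula \eqref{schurdef} evaluates it to $V(a_1, \ldots, a_j)\, s_\lambda(a_1, \ldots, a_j)$. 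Substituting the resulting expression $W_{\bf f}^t[j, \lambda] = F_\lambda\, V(a_1, \ldots, a_j)\, s_\lambda(a_1, \ldots, a_j)$ into \eqref{MWT} and dividing both sides by $V(a_1, \ldots, a_j)\, V(x_{i-j+1}, \ldots, x_i)$ yields \eqref{MFG}.

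The identity \eqref{MFtG} follows from \eqref{MWTt} by the same reduction applied to $W_{\bf f}^t[i, \lambda]$, where the submatrix uses rows $r \in \{i-j+1, \ldots, i\}$ instead of $\{1, \ldots, j\}$. The column-scalar extraction again yields $F_\lambda$, and the residual Vandermonde-type determinant is evaluated by \eqref{schurdef} to $V(a_{i-j+1}, \ldots, a_i)\, s_\lambda(a_{i-j+1}, \ldots, a_i)$; dividing by the corresponding Vandermonde factors gives \eqref{MFtG}. There is no substantive obstacle beyond a careful matching of indices; the only delicate step is verifying that the increasing column ordering $c_m = \lambda_{j-m+1} + m$ reindexes correctly into the partition ordering of $F_\lambda$ in \eqref{eq:FLambda}.
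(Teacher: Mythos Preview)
Your proposal is correct and follows essentially the same route as the paper's own proof: both compute $(W_{\bf f}^t)_{l,k+1}=a_l^{k}f^{(k)}(0)$, factor the column scalars to produce $F_\lambda$, identify the residual determinant with the Jacobi bialternant $V\cdot s_\lambda$ in the $a$-variables, and then substitute into \eqref{MWT} and \eqref{MWTt}. Your treatment of the column reindexing $c_m=\lambda_{j-m+1}+m$ is in fact slightly more explicit than the paper's, which simply records the matrix $A_j=(a_{i-j+r}^{\lambda_{j-s+1}+s-1})_{r,s}$ and asserts the bialternant evaluation.
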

\begin{proof}
The elements of the infinity Wronskian \( W^t_{\bf f} \) defined in  \eqref{extendedW} satisfy
\[   
\Big(W^t_{\bf f}\Big)_{l,\,k+1} = f_l^{(k)}(0) = a_l^k\,f^{(k)}(0).  
\]
Let \( A_j = (a_{r,s})_{r,s=1}^j \), with \( a_{r,s} = a^{\lambda_{j-s+1} + s - 1}_{i-j+r} \). Then, we have  
\begin{equation}\label{WFG}
    W_{\bf f}^t{[i,\lambda]} = f^{(\lambda_j)}(0)\cdots f^{(\lambda_1+j-1)}(0) \det A_j  
    = F_\lambda V( a_{i-j+1}, \dots, a_i) s_{\lambda}(a_{i-j+1}, \dots, a_i).
\end{equation}  
Substituting \eqref{WFG} into \eqref{MWTt} and \eqref{MWT}, formulae \eqref{MFG} and \eqref{MFtG} are derived.  \qed
\end{proof}

Now, from formula \eqref{MFG}, we derive the following condition that guarantees the total positivity of the systems  defined by \eqref{defg}.

\begin{theorem}\label{TheoremTP}
Let \( {\bf f} = (f_1,  \dots, f_n) \) be the system defined as in \eqref{defg} for a function $f$ such that $f(0)\geq 0$  and $f^{(k)}(0)\geq 0$ for $k\in \mathbb{N}$.  Then ${\bf f}$ is TP on the interval $(0,R/a_n)$. 
\end{theorem}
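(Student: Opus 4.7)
The plan is to read off the positivity of every initial minor of $M_{\mathbf{f},\mathbf{x}}$ directly from the expansions \eqref{MFG} and \eqref{MFtG} in Theorem~\ref{theo}, and then invoke the characterization of total positivity through initial minors recalled in the Introduction.

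First, I would fix an arbitrary increasing sequence $\mathbf{x} = \{x_1 < \cdots < x_n\}$ in $(0, R/a_n)$. Since $0 < a_i \le a_n$, each product $a_i x_l$ lies in $(0,R)$, so the MacLaurin series defining $f_i(x_l) = f(a_i x_l)$ converges, and the collocation matrix $M_{\mathbf{f},\mathbf{x}}$ is well defined. Moreover the sequences $(a_1,\dots,a_n)$ and $(x_1,\dots,x_n)$ are strictly increasing and positive, so every Vandermonde factor $V(a_{i-j+1},\dots,a_i)$ and $V(x_{i-j+1},\dots,x_i)$ appearing in \eqref{MFG} and \eqref{MFtG} is strictly positive.

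Next, I would check termwise nonnegativity of the series on the right-hand side of \eqref{MFG}. The constants $C_\lambda$ defined in \eqref{eq:CLambda} are products of factorials and hence positive. The hypothesis $f^{(k)}(0) \ge 0$ for all $k \ge 0$ immediately gives
\[
F_\lambda \;=\; \prod_{l=1}^{j} f^{(\lambda_l + j - l)}(0) \;\ge\; 0
\]
for every $\lambda \in \Lambda$. Finally, by property (i) of Schur polynomials recalled in Section~\ref{sec:prelim}, we have $s_\lambda(a_1,\dots,a_j) \ge 0$ and $s_\lambda(x_{i-j+1},\dots,x_i) \ge 0$ (with the convention that $s_\lambda = 0$ when $\ell(\lambda) > j$, as covered by property (ii), so the sum reduces to a well-defined nonnegative series). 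Combining these observations with the positivity of the two Vandermonde prefactors, \eqref{MFG} yields
\[
\det M_{\mathbf{f},\mathbf{x}}[i-j+1,\dots,i \mid 1,\dots,j] \;\ge\; 0,
\qquad 0 < j \le i \le n.
\]
The same reasoning applied to \eqref{MFtG} gives the analogous nonnegativity for the initial minors of $M_{\mathbf{f},\mathbf{x}}^t$.

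To conclude, I would appeal to the characterization (Section 4 of \cite{Gasca1}, recalled after \eqref{Aminors}) that a matrix is TP if and only if all initial minors of the matrix and of its transpose are nonnegative. Since $\mathbf{x}$ was an arbitrary increasing sequence in $(0, R/a_n)$, this proves that $\mathbf{f}$ is TP on that interval. There is no real obstacle here: the argument mirrors the proof of Theorem~\ref{theoWF}, with the advantage that the hypothesis on $f$ already forces each individual factor $F_\lambda$ to be nonnegative, bypassing any need to check total positivity of the full Wronskian $W_{\mathbf{f}}$.
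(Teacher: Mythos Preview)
Your proof is correct and follows essentially the same approach as the paper: both arguments apply the expansions \eqref{MFG}--\eqref{MFtG} from Theorem~\ref{theo}, observe that the Vandermonde factors, the constants $C_\lambda$, and the Schur polynomials are positive while the hypothesis on $f$ forces $F_\lambda\ge 0$, and then conclude via the initial-minor criterion from \cite{Gasca1}. Your version is slightly more explicit about convergence and about the vanishing of terms with $\ell(\lambda)>j$, but the structure is identical.
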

\begin{proof}
For \( 0 \leq a_1 < \cdots < a_n \) and \( 0 \leq x_1 < \cdots < x_n \), the Vandermonde determinant for any ordered subsequence of \({\bf x} = (x_1, \ldots, x_n)\) or \({\bf a} = (a_1, \ldots, a_n)\) is positive.  
Moreover, for any partition \(\lambda\), both the constants \(C_\lambda\) and the Schur functions \(s_\lambda\) are positive. Consequently,  if \(f^{(k)}(0) \geq 0\) for all \(k \in \mathbb{N}\), it follows from Theorem \ref{theo} that \(F_\lambda \geq 0\). 
This implies that all the initial minors  \eqref{Aminors} of \(M_{\bf f, \bf x}\) and \(M^t_{\bf f, \bf x}\) are nonnegative, thereby ensuring the total positivity of the collocation matrix \(M_{\bf f, \bf x}\) (see Section 4 of \cite{Gasca1}).    \qed
\end{proof}

As a direct consequence of Theorem \ref{theo}, Cauchy-type identities are derived.

\begin{corollary} \label{Theorem}
  Let  \( {\bf f} = (f_1,  \dots, f_n) \) be the system defined  as in \eqref{defg} and    \(M_{\bf f,  \bf x} \) its collocation matrix 
at the increasing sequence ${\bf x}=(x_1,\dots, x_n)$. Then
\begin{equation}\label{Cor}
 \sum_{\lambda\in \Lambda  }\frac{F_\lambda}{C_\lambda}s_{\lambda}(a_1,\dots,a_n)s_{\lambda}(x_{1},\dots,x_n)=\frac{\det  M_{\bf f,  \bf x} }{ V(a_{1},\dots,a_n) V(x_{1},\dots,x_n)},
\end{equation}
where $C_{\lambda}$    is defined in \eqref{eq:CLambda}, $F_{\lambda}$    is defined in \eqref{eq:FLambda} and $V$ denotes the Vandermonde determinant  in  \eqref{eq:V}, 
\end{corollary}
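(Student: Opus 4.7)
The plan is to deduce the identity \eqref{Cor} as the direct specialization of Theorem \ref{theo} (specifically of \eqref{MFG}) to the case $i=j=n$. In this case the submatrix $M_{\mathbf{f},\mathbf{x}}[i-j+1,\dots,i\mid 1,\dots,j]$ is exactly the full $n\times n$ collocation matrix $M_{\mathbf{f},\mathbf{x}}$, the parameter blocks $(a_1,\dots,a_j)$ and $(x_{i-j+1},\dots,x_i)$ become $(a_1,\dots,a_n)$ and $(x_1,\dots,x_n)$, and the Vandermonde prefactor on the left-hand side of \eqref{MFG} reduces to the product $V(a_1,\dots,a_n)\,V(x_1,\dots,x_n)$.

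Concretely, I would substitute $i=j=n$ in \eqref{MFG} and rearrange to isolate the ratio $\det M_{\mathbf{f},\mathbf{x}}/[V(a_1,\dots,a_n)V(x_1,\dots,x_n)]$. Since both $(a_i)$ and $(x_i)$ are strictly increasing sequences, neither Vandermonde determinant vanishes, so the rearrangement is legitimate and reproduces \eqref{Cor} verbatim, with the constants $C_\lambda$ and $F_\lambda$ as in \eqref{eq:CLambda} and \eqref{eq:FLambda}.

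The only point worth a brief comment is that the sum on the right-hand side of \eqref{MFG} is written over all $\lambda\in\Lambda$, yet by property (ii) of Section \ref{sec:prelim} one has $s_\lambda(a_1,\dots,a_n)=0$ (and likewise in the $x$-variables) whenever $\ell(\lambda)>n$. Hence the series automatically truncates, in the sense that only partitions with at most $n$ parts contribute nontrivially on either factor. There is no real obstacle: all substantive content has already been absorbed into the proof of Theorem \ref{theo}, and the corollary follows by specialization $i=j=n$ together with this harmless observation on the effective support of the sum.
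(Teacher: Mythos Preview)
Your proposal is correct and matches the paper's approach: the paper presents this corollary explicitly ``as a direct consequence of Theorem \ref{theo}'', without giving a separate proof, and your specialization $i=j=n$ in \eqref{MFG} is exactly the intended one-line derivation. Your remarks on the nonvanishing of the Vandermonde factors and on the automatic truncation to $\ell(\lambda)\le n$ are valid and harmless elaborations.
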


Observe that any permutation  of the entries \((a_{\sigma(1)}, \dots, a_{\sigma(n)})\) or \((x_{\sigma(1)}, \dots, x_{\sigma(n)})\) leaves both sides of \eqref{Cor} invariant.  Furthermore, the symmetry under the exchange \((a_1, \dots, a_n) \leftrightarrow (x_1, \dots, x_n)\) on the left-hand side is reflected on the right-hand side of  \eqref{Cor}. This follows because such an exchange transposes the collocation matrix \( M_{\bf f,  \bf x}\), leaving its determinant unchanged.

Also, note that a function \(f\) uniquely determines both the left-hand side of \eqref{Cor}, through the coefficients \(F_\lambda\), and the right-hand side, via the collocation matrix \(M_{\bf f,  \bf x}\). Consequently, each analytic function \(f\) gives rise to a distinct Cauchy-type identity. This result highlights the versatility of Corollary \ref{Theorem}, which will be further demonstrated in the following cases. Further illustrative examples can be found in \cite{DM}.

\begin{exmp}
For a single variable $x_1$ and $a_1=1$, formula \eqref{Cor} reduces to the Maclaurin expansion of \(f(x)\). Notice that, in this case, only the partitions with one part, $l(\lambda)=1$, contribute to the sum. On the other hand, for $|\lambda|=k$, we have
\begin{equation*}
    C_\lambda = k!, \quad F_\lambda = f^{(k)}(0), \quad \det M_{\bf f,  \bf x} = f(x_1),
\end{equation*}
thus recovering the Maclaurin series of $f$:
\[
\sum_{k=0}^\infty \frac{ f^{(k)} (0) }{k!}x_1^k=f(x_1).
\]
\end{exmp}

\begin{exmp}
When considering $f(x)=\frac{1}{1-x}$, we have $f(0)=1$,  $f^{(k)}(x)= k! (1-x)^{-k-1}$, and thus:
\[
F_\lambda =\prod_{l=1}^n  f^{(\lambda_l+n-l)}(0)=  (\lambda_1+n-1)!\cdots  (\lambda_{n-1}+1)! \, \lambda_n!=C_\lambda.\] 
On the other hand, using a well-known variant of the classical Cauchy determinant identity:
\begin{equation*}
    \det\left( \frac{1}{x_i - y_j}\right)_{i,j=1}^n = \frac{  \prod_{1\le i < j \le n } (x_j-x_i)   (y_i-y_j) } { \prod_{1\le i,j \le n }^n(x_i - y_j)},
\end{equation*}
we derive 
\begin{eqnarray*}
     \det M_{\bf f,  \bf x} &=& \det \left( \frac{1}{1 - a_j x_i} \right)_{i,j=1}^n =
     \det \left( \frac{-1/a_j}{x_i -\frac{1}{ a_j} }\right)_{i,j=1}^n = 
 \frac{ (-1)^n }{\prod_{j=1}^na_j}   
 \det \left( \frac{ 1 }{x_i -\frac{1}{ a_j} }\right)_{i,j=1}^n \\
&=& \frac{ (-1)^n }{\prod_{j=1}^na_j}      \frac{  \prod_{1\le i < j \le n } \left(\frac{1}{a_i}-\frac{1}{a_j} \right)(x_j-x_i)   }{ \prod_{1\le i , j \le n} \left(x_i -\frac{1}{ a_j}\right)} \\
&=& 
        (-1)^n\frac{\prod_{i=1}^na_i^{n-1}}{  \prod_{1\le i < j \le n}a_i a_j}  \frac{   \prod_{1\le i< j \le n} (a_j-a_i)(x_j-x_i)}{\prod_{1\le i , j \le n} (a_jx_i -1)}. 
 \end{eqnarray*}
Since 
\[ 
\prod_{1\le i < j \le n } a_i a_j = \prod_{i=1}^n a_i^{n-1},
\]
we can write:
\begin{eqnarray}
    \det M_{\bf f,  \bf x}&=&   (-1)^n    \frac{    \prod_{1\le i < j \le n }  (a_j-a_i )(x_j-x_i)    }{  \prod_{1\le i , j \le n} \left(a_j x_i  - 1 \right)}  = 
 (-1)^{n(1-n)}    \frac{    \prod_{1\le i < j \le n }     (a_j-a_i )(x_j-x_i) }{  \prod_{1\le i , j \le n} \left(1- a_j x_i   \right)} \nonumber \\
 &=&   \frac{    \prod_{1\le i < j \le n }    (a_j-a_i )(x_j-x_i)  }{  \prod_{1\le i , j \le n} \left(1- a_j x_i   \right)}=\frac{  V( a_1,\dots,a_n) V( x_1,\dots,x_n)   }{\prod_{1\le i , j \le n}(1 - a_j x_i)}.\label{eq:detC0}
\end{eqnarray}
This recovers the Cauchy identity
\[
     \sum_{\lambda\in\Lambda} s_\lambda(a_1,\dots,a_n)s_\lambda(x_1,\dots,x_n) = \frac{\det \left( \frac{1}{1 - a_j x_i} \right)_{i,j=1}^n }{ V( a_1,\dots,a_n) V( x_1,\dots,x_n)  }=  \prod_{1\le i,j\le n}\frac{1}{1 - a_j x_i}.
\]
 From Theorem \ref{TheoremTP}, given $0<a_1<\cdots<a_n$, we can guarantee that the system of rational functions 
$$
{\mathbf f}=\left( \frac{1}{1-a_1x}, \frac{1}{1-a_2x},\ldots , \frac{1}{1-a_nx}\right)
$$
is totally positive on the interval $(0,1/a_n)$.
\end{exmp}
\begin{exmp}
     For   
    \(f(x)=\frac{1}{1-x^2}\),  we have $f(0)=0$,  
    \[
    f^{(k)}(x)= k!  \left( (1-x)^{-k-1} + (-1)^k (1+x)^{-k-1} \right)/2,
    \] 
    and $f^{(k)}(0)=0$ if $k$ is an odd integer.  Thus:
         \begin{equation*}
                 F_\lambda= \prod_{l=1}^n  f^{(\lambda_l+n-l)}(0)=\begin{cases}  (\lambda_1+n-1)!\cdots  (\lambda_{n-1}+1)! \, \lambda_n!=C_\lambda,  & \textrm{ if }  \lambda \in \Lambda^n_{even}, \\ 0, & \textrm{otherwise},  \end{cases}    
    \end{equation*}
where     \( \Lambda^n_{even} := \{ \lambda \in  \Lambda \mid \lambda_l + n - l \textrm{ is even for all } l = 1, \dots, n\}\).
Now, using \eqref{eq:detC0}, we derive:
  \[ 
           M_{\bf f,  \bf x}=\left( \frac{1}{1-a^2_jx^2_i}\right)_{i,j=1}^n 
           = \frac{    \prod_{1\le i < j \le n } (x_j -x_i )  (x_j+x_i)  (a_j -a_i ) (a_j+a_i  ) }{  \prod_{1\le i , j \le n} \left(1- a_j  x_i  \right) \left(1+ a_j  x_i \right) },
  \]
and,  from \eqref{Cor},   obtain the following Cauchy-type identity:
    \begin{equation}\label{minusx2}
      \sum_{\lambda \in \Lambda^n_{even}}  s_\lambda(a_1,\dots,a_n)s_\lambda(x_1,\dots,x_n)=\frac{ \prod_{1\le i < j \le n }(a_i+a_j)(x_i+x_j)}{\prod_{1\le i , j \le n }(1-a_jx_i) (1+a_jx_i)}.
    \end{equation}
On the other hand, from Theorem \ref{TheoremTP}, we can guarantee that,  for $0<a_1<\cdots<a_n$,  the system of rational functions 
$$
{\mathbf b}=\left( \frac{1}{1-a_1^2x^2}, \frac{1}{1-a_2^2x^2},\ldots , \frac{1}{1-a_n^2x^2}\right)
$$
 is totally positive on the interval $(0,1/a_n)$.

Furthermore, from \eqref{eq:detC0}, we can write 
\begin{eqnarray*} \label{eq:colocacionEj4}
   &&\det \left( \frac{1}{1+a^2_jx^2_i}\right)_{i,j=1}^n=  
      \frac{   \prod_{1\le i< j \le n} (x_j-x_i)(x_j+x_i) (a_j+a_i)(a_i-a_j)}{\prod_{1\le i , j \le n} (1+a_j^2x_i^2)}\\ 
      &=& (-1)^{n(n-1)/2}   
      \frac{   \prod_{1\le i< j \le n} (x_j+x_i) (a_j+a_i) V( x_1,\dots,x_n)  V( a_1,\dots,a_n)}{\prod_{1\le i , j \le n} (1+a_j^2x_i^2)}.
 \end{eqnarray*}

Let  \(\mathbf{i} \) denote the imaginary unit, such that \(\mathbf{i}^2= -1\), and 
consider the transformation \(a_k\longrightarrow \mathbf{i} a_k\), for \(k=1,\dots, n\). By properties of the determinant, we have 
\[
 V( \mathbf{i} a_1,\dots, \mathbf{i}a_n) =\mathbf{i} ^{n(n-1)/2}  V( a_1,\dots,a_n),   
\]
and, using properties of Schur polynomials, 
\[
s_\lambda(\mathbf{i} a_1,\dots,\mathbf{i} a_n)= \mathbf{i} ^{|\lambda|} s_\lambda(a_1,\dots, a_n).
\]
Taking into account the previous identities in  \eqref{minusx2}, we derive 
 \begin{eqnarray*}
      \sum_{\lambda \in \Lambda^n_{even}}  s_\lambda( \mathbf{i}  a_1,\dots,  \mathbf{i} a_n)s_\lambda(x_1,\dots,x_n)&=&  
      \frac{  \det\left( \frac{1}{1+a^2_jx^2_i}\right)_{i,j=1}^n}{  
      V( x_1,\dots,x_n)  V( \mathbf{i}a_1,\dots,\mathbf{i}a_n)}\\ 
      &=& \mathbf{i}^{\frac{n(n-1)}{2} } \frac{  \prod_{1\le i< j \le n} (x_j+x_i) (a_j+a_i) } { \prod_{1\le i , j \le n} (1+a_j^2x_i^2)}. 
  \end{eqnarray*}
Equivalently, we can write
 \[
      \sum_{\lambda \in \Lambda^n_{even}}  \mathbf{i} ^{|\lambda|}  s_\lambda( a_1,\dots, a_n)s_\lambda(x_1,\dots,x_n)=  
     (-1)^{\frac{n(n-1)}{4} } \frac{  \prod_{1\le i< j \le n} (x_j+x_i) (a_j+a_i) } { \prod_{1\le i , j \le n} (1+a_j^2x_i^2)},
 \]
which implies the following Cauchy-type identity
\begin{equation}\label{plusx2}
      \sum_{\lambda \in \Lambda^n_{even}}  (-1)^{\frac{2 |\lambda|-n(n-1)}{4}}s_\lambda(a_1,\dots,a_n)s_\lambda(x_1,\dots,x_n)  =\frac{ \prod_{1\le i < j\le n}(a_i+a_j)(x_i+x_j)}{\prod_{1\le i,j\le n}(1+a^2_ix^2_j)}.
\end{equation}
\end{exmp}
 Note that \eqref{plusx2} could also have been derived from Theorem \ref{theo} by taking  $f(x)=1/(1+x^2)$.

\section{Conclusions and further research}\label{sec:conclusions}
In this paper, we have studied the bidiagonal factorization of the collocation matrices of analytic bases using symmetric functions. Specifically, we have obtained the expansion of the initial minors of their collocation matrices on the Schur polynomial basis of symmetric functions. The structure of the expansions has enabled us to establish sufficient conditions for the total positivity of generic systems of analytic functions. Besides, the expansions have been found to lead to generalizations of the Cauchy identity for certain families of functions.  

This work provides a solid theoretical framework for various applications. 
For instance, the general expansions derived in Theorem \ref{theo1} can be applied to the case of Wronskian and Gram matrices, where the expansion of their initial minors was previously studied for the
polynomial case in   \cite{DMR2} and \cite{DMR3}, respectively. This will result in more general and unified expressions
for their initial minors. 
Furthermore, exploring the families defined by \eqref{defg} in this context
may lead to simplified and more insightful formulas.
 
An interesting direction for future research is to explore the deeper theoretical meaning
of the Cauchy-type identities established in this work.   Among the many interpretations of the classical Cauchy identity, one of the most illuminating arises from representation theory, where it reflects properties of the characters of the General Linear group (see, for instance,
  \cite{Bump}, Chapter
38), which makes it a fundamental piece in algebraic combinatorics. The Cauchy-type identities found in this paper likely admit an analogous interpretation. 

Finally, working with totally positive bases offers a significant advantage in numerical
computations, particularly in achieving high relative accuracy (HRA). The analytic bases
studied in this paper, especially the families defined by \eqref{defg}, provide a promising framework
for precision numerical calculations using appropriate algorithms. This suggests a strong
potential for further development in both theoretical and computational directions.

 
\bigskip

\section*{Statements and Declarations}

\noindent{\bf Competing interest} The authors declare no competing interests.\\
 
\noindent{\bf Funding} This work was partially supported by Spanish research grants PID2022-138569NB-I00 (MCI/AEI) and RED2022-134176-T (MCI/AEI) and by Gobierno de Aragón (E41$\_$23R, S60$\_$23R). \\

\noindent{\bf Data Availability}  Enquiries about data availability should be directed to the authors.

\end{document}